\DeclareMathOperator{\Aut}{Aut}
\DeclareMathOperator{\St}{St}     
\renewcommand{\a}{\alpha}
\renewcommand{\b}{\beta}
\newcommand{\g}{\gamma}
\newcommand{\Sym}{\mathrm {Sym}}      
\newtheorem{theorem}{Theorem} 
\newtheorem{corollary}[theorem]{Corollary}
\newtheorem{lemma}[theorem]{Lemma}
\theoremstyle{remark}
\newtheorem{remark}{\bf Remark}
\theoremstyle{example}
\theoremstyle{conjecture}
\begin{document}
\title{Dice periodic groups}
\author{Victor Petrogradsky}
\address{Department of Mathematics, University of Brasilia, 70910-900 Brasilia DF, Brazil}
\email{petrogradsky@rambler.ru}

\subjclass[2000]{
20E08, 
20F50  
}

\keywords{groups acting on trees, $p$-groups, periodic groups,  self-similar groups, spinal groups, nil-algebras}

\begin{abstract}
We construct a family of finitely generated infinite periodic groups.
The basic example is a 2-group, called the tetrahedron group. 
We generalize the construction by suggesting a family of infinite finitely generated dice groups.
We provide weak conditions under which dice groups are periodic,
where orders of elements are products involving finitely many given primes.
\end{abstract}

\maketitle


\section{Introduction}

Golod and Shafarevich gave first examples of finitely generated infinite $p$-groups for any prime $p$,
thus answering in negative to the General Burnside Problem. 
These examples are based on the construction of a family of finitely generated infinite dimensional associative nil-algebras~\cite{Golod64}.
Grigorchuk and Gupta-Sidki gave direct and elegant constructions of finitely generated $p$-groups~\cite{Grigorchuk80,GuptaSidki83}.

One of generalisations of the Grigorchuk and Gupta-Sidki groups are so called {\it constant spinal groups} that are subgroups of automorphisms of a regular rooted tree 
generated by rooted automorphisms and directed automorphisms whose action on a subtree is equal to global action, for details, conditions of periodicity, and more references see~\cite{Petschick23A}. 
An analogue of the Grigorchuk group, so called {\it Fibonacci restricted Lie algebra} having a nil $p$ mapping was constructed by the author~\cite{Pe06}.

In Section~1 we construct a basic example of an infinite 4-generated 2-group, called a {\it tetrahedron group}.
The proof of periodicity of this group resemble that of Gupta-Sidki group~\cite{GuptaSidki83}. 
This basic example was motivated by the second example of a self-similar Lie superalgebra suggested and studied earlier by the author in~\cite{Pe16}.
This example also coincides with the group $K_3$ constructed recently in~\cite{Petschick23B}. 
We study its properties and prove periodicity, which was established for a family of groups denoted as $K_r$, where $r>4$~\cite{Petschick23B}.
Periodicity was also proved for classes of spinal groups~\cite{Petschick23A}.  
Groups of small period growth acting on spherically homogeneous trees were recently constructed and studied in~\cite{Petschick23B,Petschick24}.

As a main result, 
in Section~2 we generalize the tetrahedron group by suggesting a family of infinite finitely generated {\it dice groups}.
We give a rather weak conditions on their periodicity, so we obtain infinite finitely generated  
periodic dice groups which periods are products  involving finitely many given primes (Theorem~\ref{Tmain}).
The family of dice groups was motivated by a family of restricted nil Lie algebras of extremely oscillating intermediate growth, called {\it Phoenix algebras}, constructed by the author
in a series of papers~\cite{Pe17,Pe20clover,Pe20flies}. 
The present research is close to examples developed in~\cite{Petschick23B,Petschick24} but goes in a different direction.
The proof of periodicity is close to that of the Gupta-Sidki group~\cite{GuptaSidki83}.

On groups acting on tress see~\cite{Grigorchuk00horizons,Nekr05}.
We denote $b^a:=a^{-1}ba$ in a group, $\mathbf C_n$ is the cyclic group of order $n$.
By $\langle S\rangle $ we denote the subgroup generated by a set $S$.
The actions are left.

\section{Basic example: Tetrahedron group}

More detailed definitions on groups acting on trees see in the next section.
Consider the alphabet $X:=\{0,1,2,3,4,5,6,7\}$, which elements we identify with the vertices of the 3-dimensional cube, as shown on Fig~\ref{Fig1}.
Let $a$ be the reflection of the cube determined by the first axis of space. 
Namely, consider the plane parallel to the left and right vertical planes cutting the cube into equal parts. Now $a$ is the reflection through this plane.
Similarly, denote by $b,c$ the reflections determined by the second and third axes of space. The reflections $a,b,c$ are shown as three green arrows on Fig~\ref{Fig1}.
Consider the group generated by these reflections $H:=\langle a,b,c\rangle\subset \Sym_8.$
Clearly, we obtain an abelian group of isometries of the cube and 
\begin{equation}\label{z23}
H\cong \langle a\rangle\times \langle b\rangle\times \langle c\rangle\cong   \mathbf C_2^3.
\end{equation}
Consider the tetrahedron with vertices marked red on Fig~\ref{Fig1}.
The subgroup of $H$ that leaves the red tetrahedron invariant is the Klein group $\bar H:=\{1,ab, ac, bc\}\cong \mathbf C_2^2$,
which three nontrivial elements are rotations at $180^\circ$ around lines connecting centers of the opposite faces.
The remaining vertices $\{1,2,4,7\}$ belong to another tetrahedron, which we call black.
Let $h\in H$, $k\in X$, then $h(k)={}^hk$ denotes the {\it left} action, which can be considered as a shift on the vector space (also elementary abelian group) $H=\mathbf C_2^3=X$ as well.

Let $T$ be the infinite rooted 8-ary regular tree, which vertices are identified with the language $X^*$.
The words of length $n$ are identified with $n$th level $V_n:=X^n$ of the tree, for all $n\ge 0$.
In particular, the root is considered of level zero and marked by $\emptyset$.
Let $v\in V_n\subset  T$ be a vertex of level $n$, $n\ge 0$. Then it has eight edges $vx$, $x\in X$ going down to vertices $V_{n+1}=X^{n+1}$ of $(n+1)$th level.  

We consider that $a,b,c\in \Aut T$ act on the first letter of nonempty words $v\in X^*\setminus\{\emptyset\}$, thus permuting the eight subtrees of $T$ starting from the first level.
Thus, they are {\it rooted automorphisms} of the tree $T$.  
Next, we define {\it directed} automorphism $w\in\Aut T$ recursively as
\begin{equation}\label{www}
w:=(w \rfloor_0, a \rfloor_3, b \rfloor_5, c \rfloor_6), 
\end{equation} 
where by $\rfloor$ we specify (here and below)   only the nontrivial sections at vertices $\{0,3,5,6\}$, which correspond to the red tetrahedron.
This recursion is shown by the red tetrahedron marked by respective sections on Fig~\ref{Fig1}.

Finally, we define the {\it tetrahedron group} $G:=\langle w,a,b,c\rangle\subset \Aut T$.
By our construction, $G$ is self-similar.
Clearly, the generators are involutions, i.e.
\begin{equation}\label{involution}
w^2=a^2=b^2=c^2=1.
\end{equation} 
\begin{figure}[h]
\centering
\caption{Cube vertices are identified with the set $X$. Element $w\in\Aut T$ is defined recursively~\eqref{www}, the nontrivial sections are placed at the vertices of the red tetrahedron and marked by respective letters.}
\label{Fig1} 
  \includegraphics[width=0.3\textwidth]{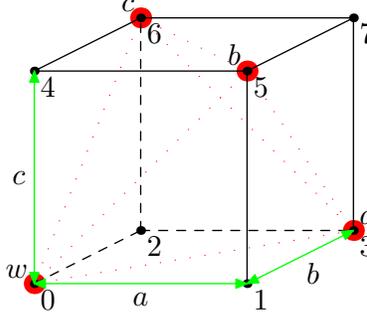}
\end{figure}

\begin{lemma} \label{L1}
$A:=\langle a,w \rangle \cong \mathbf D_8$, the dihedral group of order 8. 
\end{lemma}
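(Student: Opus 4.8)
The plan is to exploit that both generators are involutions. Since $a^2=w^2=1$ by \eqref{involution} and $a\neq w$ (indeed $a$ permutes the eight first-level subtrees while $w$ fixes them), the subgroup $A=\langle a,w\rangle$ is automatically dihedral: putting $r:=aw$ one computes $a\,r\,a=a(aw)a=wa=r^{-1}$, so $\langle r\rangle$ is normalised by $a$ and $A=\langle r\rangle\rtimes\langle a\rangle$ has order $2\,\mathrm{ord}(r)$, provided $a\notin\langle r\rangle$. The whole statement thus reduces to the single numerical fact $\mathrm{ord}(aw)=4$, i.e. to proving $(aw)^4=1$ while $(aw)^2\neq1$.

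To find the order I would pass to the self-similar decomposition. The rooted involution $a$ acts on the first letter as a single reflection of the cube and therefore interchanges the red tetrahedron $\{0,3,5,6\}$ with the black one $\{1,2,4,7\}$; all its sections are trivial. The directed element $w$ fixes the first level and, by \eqref{www}, carries the nontrivial sections $w,a,b,c$ exactly at the red vertices $0,3,5,6$ and trivial sections at the black vertices. Multiplying, $aw$ inherits the first-level action of $a$ together with the sections of $w$. The key observation is that, since $a$ sends each red vertex to a black one (where $w$ is trivial), in forming $(aw)^2$ no two nontrivial sections are ever multiplied together: each section of $(aw)^2$ equals a single one of the involutions $w,a,b,c$, so that $(aw)^2$ fixes the first level and its eight sections are, up to position, $w,a,b,c$ each appearing twice.

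From here the conclusion is immediate. On the one hand $(aw)^2\neq1$, since it carries the nontrivial section $a$ (equivalently $b$, $c$, or $w$). On the other hand, because $(aw)^2$ already fixes the first level, its square $(aw)^4$ has sections equal to the squares of $w,a,b,c$, all trivial by \eqref{involution}; hence $(aw)^4=1$. Therefore $\mathrm{ord}(aw)=4$. Finally $a\notin\langle aw\rangle$, because $a$ is rooted and so has only trivial sections, whereas every nontrivial power of $aw$ carries a nontrivial section; this non-degeneracy gives $|A|=8$ and $A\cong\mathbf D_8$.

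The heart of the argument, and the place where a convention slip would be fatal, is the bookkeeping of the section-multiplication rule in the wreath decomposition together with the verification that the support $\{0,3,5,6\}$ of $w$ is moved entirely off itself by $a$. It is precisely this red$\leftrightarrow$black swap that collapses the sections of $(aw)^2$ to single involutions and pins the order of $aw$ at $4$ rather than something larger; had $a$ instead stabilised the red tetrahedron, sections would multiply and one would have to track longer words. I would therefore check the section computation of $(aw)^2$ explicitly at all eight vertices, and separately confirm $a\notin\langle aw\rangle$, so that the dihedral group attains its full order $8$ and does not degenerate to a cyclic group.
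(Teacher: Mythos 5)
Your proof is correct, and it rests on exactly the same geometric fact as the paper's: the reflection $a$ maps the red tetrahedron $\{0,3,5,6\}$ (the support of the nontrivial sections of $w$) onto the black one $\{1,2,4,7\}$, so in any product the sections of $w$ and of $w^a$ are never multiplied against each other. Where you differ is the group-theoretic packaging. The paper first deduces from this disjointness that $w$ and $w^a$ commute, so that $\langle w, w^a\rangle\cong\mathbf C_2^2$; it then writes $A=\langle a\rangle\rightthreetimes\langle w,w^a\rangle\cong \mathbf C_2\wr\mathbf C_2$ and identifies $\mathbf D_8$ by excluding the quaternion and abelian groups of order $8$. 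You instead invoke the characterization of dihedral groups as groups generated by two involutions, reduce everything to the order of $r=aw$, and pin $\mathrm{ord}(r)=4$ by computing the sections of $(aw)^2=w^aw$ --- which is literally the same element the paper computes as $(wa)^2=ww^a$. Your route is slightly leaner: in a split extension with the involution $a$ lying outside $\langle r\rangle$, the quaternion group is ruled out automatically, so no case exclusion is needed; moreover you make the non-degeneracy check $a\notin\langle aw\rangle$ explicit, whereas the paper leaves the analogous point (that $a\notin\langle w,w^a\rangle$, so the product really has order $8$) inside its appeal to ``standard arguments.'' What the paper's packaging buys is the structural description of $A$ as $\mathbf C_2\wr\mathbf C_2$ with the distinguished commuting pair $w,w^a$, which is precisely the picture reused in the next lemma on $B=\langle a,b,w\rangle$, where $\mathbf D_8$ reappears as a section group.
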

\begin{proof} 
Observe that $w$, $w^a $ have sections $w$ at points $0,1$, respectively, and the latter cannot interact with sections $a,b,c$ inside $w$, $w^a$, because they are at vertices  of different parallel edges 23, 45, 67 of the cube,  see Fig.~\ref{Fig1}.
Hence, $w$, $w^a $ commute and $\langle w, w^a\rangle =\{1, w, w^a, ww^a\} \cong \mathbf C_2^2$.
By standard arguments $A$ is a product of two subgroups $\langle a\rangle$ and $\langle w, w^a\rangle$.  
Hence,
$$ A=\langle a,w \rangle=  \langle a \rangle \rightthreetimes \langle w, w^a\rangle  \cong\langle a\rangle  \rightthreetimes\mathbf C_2^2  \cong \langle a\rangle\wr \langle w\rangle  .$$
We have $(wa)^2=wawa=w w^a\ne 1$ and $(wa)^4=1$. 
Similarly, one checks that the elements of $A$ of order 4 are $wa$, $w^aa=aw$ because $ww^aa=waw$ is of order 2.
Hence, $A$ is not the quaternion group.
Clearly, $A$ is not abelian.  Therefore, $A\cong \mathbf D_8$.
\end{proof}
One obtains the dihedral group presentation (that is also easily obtained from the standard one):
\begin{equation}\label{Q8}
\mathbf D_8=\langle a,w \mid a^2=w^2=(wa)^4=1\rangle.
\end{equation}
\begin{lemma} $B:=\langle a,b,w \rangle\cong \mathbf C_2^2 \rightthreetimes  \mathbf D_8^2$,
where the action of $\mathbf C_2^2=\langle \bar a,\bar b\rangle $ is as follows:
$\bar a$ permutes two copies of the dihedral group, while $\bar b$ permutes the groups simultaneously flipping the involutions $a,w$~\eqref{Q8} as well.     
\end{lemma}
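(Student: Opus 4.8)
The plan is to exhibit $B$ as a semidirect product with an explicit normal subgroup and complement. Set $N:=\langle w,w^a,w^b,w^{ab}\rangle$, the subgroup generated by the four $\langle a,b\rangle$-conjugates of $w$. Since conjugation by $a$ and by $b$ merely permutes these four generators, $N$ is normalised by $a$ and $b$, hence (as $w\in N$) $N\trianglelefteq B$ and $B=\langle a,b\rangle N$. The decisive technical fact is that $w,w^a,w^b,w^{ab}$ are all \emph{directed} — they act trivially on the first level — so their products are computed section-by-section, and two of them commute as soon as their first-level supports are disjoint. First I would record the sections of the four conjugates using $(w^g)\rfloor_i=w\rfloor_{{}^{g}i}$, obtaining for instance $w=(w,1,1,a,1,b,c,1)$, $w^a=(1,b,c,1,w,1,1,a)$, $w^b=(1,a,w,1,c,1,1,b)$ and $w^{ab}=(c,1,1,b,1,a,w,1)$ at vertices $0,\dots,7$.

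Next I would split by colour. The supports of $w$ and $w^{ab}$ lie in the red vertices $\{0,3,5,6\}$, while those of $w^a$ and $w^b$ lie in the black vertices $\{1,2,4,7\}$. Consequently $D_{\mathrm{red}}:=\langle w,w^{ab}\rangle$ and $D_{\mathrm{black}}:=\langle w^a,w^b\rangle$ have disjoint first-level support; being directed, they therefore commute elementwise and intersect trivially, so $N=D_{\mathrm{red}}\times D_{\mathrm{black}}$.

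It then remains to identify each factor with $\mathbf D_8$. Each is generated by two involutions, hence is dihedral of order $2n$ with $n$ the order of the product, and everything reduces to showing that $u:=ww^{ab}$ has order $4$; this is the crux, and the only point where the recursion bites. Computing sectionwise, $u$ has sections $wc,ab,ba,cw$ at $0,3,5,6$. Upon squaring, the sections $ab=ba$ die, and one must evaluate $(wc)^2$ and $(cw)^2$: both turn out to equal the same directed element $q:=(w,w,a,a,b,b,c,c)$, and since $w,a,b,c$ are involutions~\eqref{involution} we get $q^2=1$. Hence $u^2\ne1$ but $u^4=1$, so $n=4$ and $D_{\mathrm{red}}\cong\mathbf D_8$; conjugating by $a$ gives $D_{\mathrm{black}}=D_{\mathrm{red}}^{\,a}\cong\mathbf D_8$, whence $N\cong\mathbf D_8^2$.

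Finally I would assemble the semidirect product and read off the action. Every nontrivial element of $\langle a,b\rangle\cong\mathbf C_2^2$ acts nontrivially on the first level, whereas $N$ fixes it, so $\langle a,b\rangle\cap N=1$ and $B=\langle a,b\rangle\rightthreetimes N\cong\mathbf C_2^2\rightthreetimes\mathbf D_8^2$. Tracking conjugation on generators, $a$ sends $w\mapsto w^a$ and $w^{ab}\mapsto w^b$, i.e. it swaps the two copies respecting the presentation~\eqref{Q8}, while $b$ sends $w\mapsto w^b$ and $w^{ab}\mapsto w^a$, i.e. it swaps the two copies and simultaneously exchanges the two generating involutions $a,w$ of each $\mathbf D_8$ — precisely the stated action. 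The main obstacle is the order computation for $u$, which succeeds only because the two recursive sections $(wc)^2$ and $(cw)^2$ collapse to the same involution-valued element $q$ with $q^2=1$.
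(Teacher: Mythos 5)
Your proof reaches the correct conclusion by essentially the same route as the paper: everything rests on the two facts that directed elements with disjoint first-level supports commute (as in Lemma~\ref{Lwkws}) and that a pair of conjugates whose $w$- and $a$-sections meet generates $\mathbf D_8$ (as in Lemma~\ref{L1}). The paper packages this as an embedding $B\subset\langle a,b\rangle\rightthreetimes\mathbf D_8^4$ via the bottom-face sections and then observes that the coordinates $0,3$ (and likewise $1,2$) are tied by the automorphism of \eqref{Q8} flipping the two involutions; you instead decompose $N:=\langle w,w^a,w^b,w^{ab}\rangle$ internally as $D_{\mathrm{red}}\times D_{\mathrm{black}}$ with $D_{\mathrm{red}}=\langle w,w^{ab}\rangle$ and $D_{\mathrm{black}}=\langle w^a,w^b\rangle$. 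This is the same pairing in different bookkeeping; your verification that $N\trianglelefteq B$, that $\langle a,b\rangle\cap N=1$ (via the faithful action of $\langle a,b\rangle$ on the first level), and your reading-off of the $\mathbf C_2^2$-action are all sound, and arguably more explicit than the paper's.

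However, your explicit section tables contain a systematic error. Under the identification $(h_0,\dots,h_7)=(1,a,b,ab,c,ac,bc,abc)$ and the rule $(w^{h})\rfloor_v=w\rfloor_{{}^{h}v}$ that you yourself quote, one gets $w^a=(1,w,a,1,b,1,1,c)$ and $w^{ab}=(a,1,1,w,1,c,b,1)$; compare \eqref{tetrapac}, and note that the proof of Lemma~\ref{L1} states explicitly that $w^a$ has section $w$ at vertex $1$. The tuples you wrote for $w^a$ and $w^{ab}$ are in fact those of $w^c$ and $w^{bc}$: you have interchanged the shifts by $a$ and by $c$ throughout. Consequently $u=ww^{ab}$ has sections $wa,\,aw,\,bc,\,cb$ at the vertices $0,3,5,6$, not $wc,\,ab,\,ba,\,cw$. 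Fortunately nothing downstream breaks: the colour splitting of the supports is still correct, and on squaring one gets $(wa)^2=ww^a=(aw)^2=w^a w$, which equals exactly your element $q=(w,w,a,a,b,b,c,c)$, while $(bc)^2=(cb)^2=1$; hence $u^2\ne 1$ and $u^4=1$ by \eqref{involution}, so both factors are indeed $\mathbf D_8$. The reason your slip is harmless is that the recursion \eqref{www} is symmetric under exchanging $a$ and $c$ while fixing $w$ and $b$, so your tables amount to a correct computation in $\langle w,b,c\rangle$ that transfers to $B=\langle a,b,w\rangle$; but as written the tables clash with the paper's conventions and should be corrected.
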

\begin{proof} 
Consider the action of $\langle a,b\rangle\cong \mathbf C_2^2$ on $w$ by conjugations. 
We get $\{w,w^a,w^b,w^{ab}\}$ having sections $w$ at the bottom face of the cube only,
namely at vertices  $\{0,1,2,3\}$, where they can interact with sections $a$ of the above conjugates as well. There appear section groups $\mathbf D_8$ at the bottom vertices by Lemma~\ref{L1}.
The remaining sections $b,c$ travel at the top face only, where they commute.
Thus, we get an embedding
$$B\subset  \langle a,b\rangle \rightthreetimes  \mathbf D_8^4\cong \mathbf C_2^2 \rightthreetimes  \mathbf D_8^4\cong \mathbf C_2^2 \wr \mathbf D_8.$$
Moreover,  observe that $w$, $w^{ab}$ have sections $w,a$ and $a,w$, respectively, at the opposite vertices 0,3. 
Hence, appearing section subgroups at 0,3 are related by the automorphism of $\mathbf D_8$
permuting generating involutions~\eqref{Q8}. On the other hand, the sections at 0,1 are independent.  
Finally, considering sections at 0,1 only we arrive at the claimed group. 
\end{proof}

\begin{corollary}
Any proper subset of the generating set $\{w,a,b,c\}$ of the group $G=\langle w,a,b,c\rangle$ generates a finite 2-group.
\end{corollary}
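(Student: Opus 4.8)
The plan is to list the proper subsets of $\{w,a,b,c\}$ and reduce them, by a symmetry permuting $a,b,c$, to the two cases already computed in Lemma~\ref{L1} and in the lemma computing $B$. Every proper subset either avoids $w$ or contains $w$ together with at most two of the rooted involutions $a,b,c$. The subsets avoiding $w$ lie inside $\langle a,b,c\rangle\cong\mathbf C_2^3$ by~\eqref{z23}, hence generate finite elementary abelian $2$-groups, while $\langle w\rangle\cong\mathbf C_2$ by~\eqref{involution}; these cases are immediate.

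The key step is to produce an action of $S_3$ on $G$ that fixes $w$ and permutes $a,b,c$. For a permutation $\sigma$ of the three coordinate axes of the cube $X=\mathbf C_2^3$, let $\hat\sigma\in\Aut T$ be the automorphism applying $\sigma$ to every letter of every word. Conjugation by $\hat\sigma$ carries the rooted involution of one axis to that of the image axis, so it realises $\sigma$ on $\{a,b,c\}$. Using the portrait rule $(w^{\hat\sigma})\rfloor_x=(w\rfloor_{\sigma(x)})^{\hat\sigma}$, I would check that $w^{\hat\sigma}$ again obeys the defining recursion~\eqref{www}: since $\sigma$ fixes the vertex $0$ and permutes the tetrahedron vertices $\{3,5,6\}$ exactly as it permutes $\{a,b,c\}$, the portrait of $w^{\hat\sigma}$ has $w^{\hat\sigma}$ at $0$ and $a,b,c$ at $3,5,6$, and the remaining sections are trivial; by uniqueness of the fixed point of the contraction~\eqref{www} this forces $w^{\hat\sigma}=w$. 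I expect this matching of the vertex permutation of $\{3,5,6\}$ with the generator permutation of $\{a,b,c\}$ to be the only genuine point to verify. It holds because each of $a,b,c$ is attached to the even-weight vertex carrying a $0$ on the corresponding axis, a correspondence manifestly equivariant under permutations of the axes; it suffices to confirm it on the transpositions generating $S_3$.

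Granting this symmetry, the remaining proper subsets fall into two $S_3$-orbits. The two-element sets $\{w,a\},\{w,b\},\{w,c\}$ generate mutually isomorphic subgroups, each $\cong\mathbf D_8$ by Lemma~\ref{L1}, a $2$-group of order $8$. The three-element sets $\{w,a,b\},\{w,a,c\},\{w,b,c\}$ generate mutually isomorphic subgroups, each $\cong\mathbf C_2^2\rightthreetimes\mathbf D_8^2$ by the lemma computing $B$, of order $4\cdot 8^2=2^8$. Since every group arising here, namely $\mathbf C_2$, $\mathbf C_2^3$, $\mathbf D_8$ and $\mathbf C_2^2\rightthreetimes\mathbf D_8^2$, is finite of $2$-power order, every proper subset of $\{w,a,b,c\}$ generates a finite $2$-group, as claimed.
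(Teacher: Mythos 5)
Your overall reduction strategy (trivial cases plus the two computed lemmas) is the right one, but its key step --- the $S_3$-action on $G$ fixing $w$ --- does not exist, and this is a genuine gap. With the paper's identification of vertices with group elements, vertex $3=ab$, vertex $5=ac$, vertex $6=bc$, so the recursion~\eqref{www} places $a$ at the vertex $ab$, $b$ at $ac$, and $c$ at $bc$; in coordinates, $a$ sits at $(1,1,0)$ (zero on the \emph{third} axis), $b$ at $(1,0,1)$ (zero on the second), $c$ at $(0,1,1)$ (zero on the \emph{first}). The correspondence you assert (``each of $a,b,c$ is attached to the vertex carrying a $0$ on the corresponding axis'') is therefore \emph{reversed}, and a labelling reversed by the transposition $(a\,c)$ is equivariant only under permutations commuting with $(a\,c)$. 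Concretely, for $\sigma=(a\,b)$ your own portrait rule gives
\[
(w^{\hat\sigma})\rfloor_3=(w\rfloor_{\sigma(3)})^{\hat\sigma}=a^{\hat\sigma}=b,\qquad
(w^{\hat\sigma})\rfloor_5=(w\rfloor_{6})^{\hat\sigma}=c,\qquad
(w^{\hat\sigma})\rfloor_6=(w\rfloor_{5})^{\hat\sigma}=a,
\]
so $w^{\hat\sigma}=(w^{\hat\sigma}\rfloor_0,\,b\rfloor_3,\,c\rfloor_5,\,a\rfloor_6)$, and the very uniqueness argument you cite then forces $w^{\hat\sigma}\neq w$ (its recursion differs from~\eqref{www} already at vertex $3$). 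The permutations with $w^{\hat\sigma}=w$ form a subgroup of $S_3$, and it is exactly $\{1,(a\,c)\}$. Consequently your symmetry only matches $\{w,a\}\leftrightarrow\{w,c\}$ and $\{w,a,b\}\leftrightarrow\{w,b,c\}$, while the two subsets fixed by $(a\,c)$, namely $\{w,b\}$ and $\{w,a,c\}$, are left with no proof at all.

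The gap is repairable, and the repair is what the paper implicitly intends (it states the corollary without proof, as an immediate consequence of Lemma~\ref{L1} and the lemma computing $B$): although the recursion~\eqref{www} is not symmetric in $a,b,c$, the \emph{arguments} of those lemmas are. Each of $a,b,c$ interchanges the red and black tetrahedra, so for every $x\in\{a,b,c\}$ the elements $w$ and $w^x$ have disjoint supports, hence commute, and $(wx)^2=ww^x\neq 1$ gives $\langle w,x\rangle\cong\mathbf D_8$ exactly as in Lemma~\ref{L1}. Likewise the proof of the lemma on $B$ runs verbatim for $\{w,a,c\}$ and $\{w,b,c\}$ after relabelling which sections meet at which vertices; e.g.\ for $\langle w,a,c\rangle$ the four conjugates $w,w^a,w^c,w^{ac}$ carry their $w$-sections on the face $\{0,1,4,5\}$, where they meet $b$-sections and generate copies of $\langle w,b\rangle\cong\mathbf D_8$, the remaining sections commuting on the opposite face. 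So replace the appeal to a nonexistent $S_3$-symmetry by running the two lemma proofs directly for each remaining subset; your enumeration of subsets and the final conclusion are then correct.
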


We have a bijection $X=\{ ^h0\mid h\in H\}\leftrightarrow  H$.
Namely, consider $h\in H$, we observe its action on the vertex 0 of the cube, set  $k:={}^h 0\in X$. Then we denote $h=:h_k$. 
Using Fig.~\ref{Fig1}, the following two lists are identified:
$$ H=\{h_0,h_1,h_2,h_3,h_4,h_5,h_6,h_7\}:=\{1,a,b,ab,c,ac,bc,abc\}.$$
Denote $$w_k:=w^{h_k}= h_k^{-1} w h_k, \qquad k\in X.$$ 
Observe that $w_k$ has the section $w$ at the cube vertex $k$, $k\in X$.
Namely, 
\begin{equation}\label{tetrapac}
w_k=(w \rfloor _{^h0}, a \rfloor_{^h3}, b \rfloor_{^h5}, c \rfloor_{^h6}),\qquad\text{where}\quad ^h0=k,\quad  k\in X,
\end{equation}
where these nontrivial sections are at vertices of one of two tetrahedrons and trivial sections are omitted.
\begin{lemma}\label{Lwkws}
Assume that $k,s\in X$ belong to different tetrahedrons. Then $w_k,w_s$ commute.
\end{lemma}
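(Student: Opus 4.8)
The plan is to pin down the first-level vertices carrying the nontrivial sections of $w_k$ and of $w_s$, show that these two vertex sets are disjoint, and then conclude commutativity from the fact that the two automorphisms act on disjoint subtrees of $T$.

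First I would read off from \eqref{tetrapac} that the nontrivial sections of $w_k$ sit exactly at the four vertices ${}^h0,{}^h3,{}^h5,{}^h6$ with $h=h_k$, i.e. at the translate $h_k+\{0,3,5,6\}$ of the red tetrahedron under the shift action of $H=\mathbf C_2^3$ on $X=\mathbf C_2^3$. The decisive observation is that the red tetrahedron $\{0,3,5,6\}$ is precisely the even-weight subgroup of $\mathbf C_2^3$, namely the kernel of the coordinate-sum homomorphism $\mathbf C_2^3\to\mathbf C_2$, while the black tetrahedron $\{1,2,4,7\}$ is its nontrivial coset. Hence the coset $h_k+\{0,3,5,6\}$ equals the red tetrahedron when $h_k$ has even weight (that is, when $k$ is red) and equals the black tetrahedron when $k$ is black. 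In either case \emph{all} nontrivial sections of $w_k$ lie on the single tetrahedron that contains $k$.

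Next I would note that $w$ is a directed, not a rooted, automorphism: by \eqref{www} it fixes the first level $V_1=X$ of the tree, and the same holds for every conjugate $w_k=w^{h_k}$ by a rooted automorphism, since such conjugation carries the trivial root permutation of $w$ to the trivial one. Now assume $k$ and $s$ lie in different tetrahedrons, say $k$ red and $s$ black. By the previous paragraph the nontrivial sections of $w_k$ all sit on the red tetrahedron and those of $w_s$ all sit on the black one, so their supports among the first-level vertices are disjoint. Consequently $w_k$ acts nontrivially only within the subtrees rooted at red vertices and $w_s$ only within the subtrees rooted at black vertices; the two automorphisms act on disjoint portions of $T$ and therefore commute, $w_kw_s=w_sw_k$.

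The only real content is the coset computation above: once the red tetrahedron is recognised as the even-weight subgroup of $\mathbf C_2^3$, it is automatic that each of its translates is either itself or the complementary tetrahedron, which is what forces the supports of $w_k$ and $w_s$ to be disjoint. The single point deserving a line of care is that conjugation by the rooted automorphism $h_k$ merely translates the positions of the sections by $h_k$ while keeping the automorphism first-level-fixing; this is exactly the content of \eqref{tetrapac}, which I would cite rather than reprove. I do not expect a genuine obstacle, as the disjoint-support argument is the standard reason that two tree automorphisms commute.
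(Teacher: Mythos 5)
Your proof is correct and is essentially the paper's own argument: the paper proves Lemma~\ref{Lwkws} in one line by noting, via~\eqref{tetrapac}, that the nontrivial sections of $w_k$ and $w_s$ lie on different tetrahedrons, hence the two first-level-fixing automorphisms have disjoint supports and commute. Your extra step --- identifying the red tetrahedron with the even-weight subgroup $\bar H\subset\mathbf C_2^3$, so that every translate of it is one of the two tetrahedrons --- merely makes explicit what the paper leaves implicit in~\eqref{tetrapac}.
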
 
\begin{proof}
Using~\eqref{tetrapac}, we see that the nontrivial sections of these elements belong to different tetrahedrons.
\end{proof}

\begin{lemma} \label{Lww}
Let $W:=\langle w_k,w_s \rangle$, $k,s\in X$. Then
\begin{enumerate}
\item $W$ is a finite 2-group.
\item for any $g\in W$ one has $g^4=1$.
\end{enumerate}
\end{lemma}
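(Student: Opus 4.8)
The plan is to split the argument according to the relative position of $k$ and $s$. If $k=s$ then $W=\langle w_k\rangle\cong\mathbf C_2$, and if $k$ and $s$ lie in different tetrahedrons then $w_k$ and $w_s$ commute by Lemma~\ref{Lwkws}, so $W$ is abelian of exponent at most $2$; in both cases $W$ is a finite $2$-group with $g^2=1$, hence $g^4=1$, and we are done. The only substantial case is $k\ne s$ with $k,s$ in the same tetrahedron, say the red one $\{0,3,5,6\}$ (the black case being identical).

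In that case both $w_k$ and $w_s$ are directed automorphisms fixing the first level, so $W$ fixes $V_1$ and the section map $\psi\colon W\to(\Aut T)^4$, $g\mapsto(g\rfloor_0,g\rfloor_3,g\rfloor_5,g\rfloor_6)$, is injective. Reading off \eqref{tetrapac}, at each of the two distinguished vertices $k$ and $s$ exactly one of the two sections $w_k\rfloor,\,w_s\rfloor$ equals $w$ and the other is a rooted reflection from $\{a,b,c\}$, while at the remaining two red vertices both sections are reflections from $\{a,b,c\}$. Consequently $\psi(W)$ is contained in the direct product of the four corresponding section subgroups.

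I would then identify these four factors. At the vertices $k$ and $s$ the section subgroup has the form $\langle w,x\rangle$ with $x\in\{a,b,c\}$; this is $\cong\mathbf D_8$ by Lemma~\ref{L1} together with its evident analogues for $b$ and $c$, which are proved verbatim: conjugating $w$ by $b$ (resp.\ $c$) sends the $w$-section into the opposite tetrahedron, so $w$ and $w^x$ commute by Lemma~\ref{Lwkws}, whence $\langle w,x\rangle=\langle x\rangle\rightthreetimes\langle w,w^x\rangle\cong\mathbf D_8$. At the remaining two vertices both sections lie in $H\cong\mathbf C_2^3$, so each such factor is elementary abelian of exponent $2$. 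Hence
$$
\psi(W)\hookrightarrow \mathbf D_8\times\mathbf D_8\times\mathbf C_2^2\times\mathbf C_2^2,
$$
a finite $2$-group whose exponent is the least common multiple $4$ of the exponents $4,4,2,2$ of its factors. Since $\psi$ is injective, it follows that $W$ is a finite $2$-group and that $g^4=1$ for every $g\in W$, giving (i) and (ii).

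The only real obstacle is the same-tetrahedron case, and within it the point that the two ``mixed'' section subgroups $\langle w,x\rangle$ are genuinely dihedral of order $8$ rather than something larger; this is exactly what Lemma~\ref{L1} and its $b,c$-analogues supply. Everything else is bookkeeping with the section formula~\eqref{tetrapac} and the computation of the exponent of a direct product.
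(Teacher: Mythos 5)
Your proof is correct and takes essentially the same approach as the paper: after disposing of the different-tetrahedron case via Lemma~\ref{Lwkws}, you embed $W$ (through its level-one sections) into $\mathbf D_8\times\mathbf D_8\times\mathbf C_2^2\times\mathbf C_2^2$ and read off that the exponent divides $4$. The only cosmetic differences are that you treat $k=s$ explicitly and invoke the verbatim $b,c$-analogues of Lemma~\ref{L1} where the paper instead assumes without loss of generality that the mixed section subgroups are $\langle w,a\rangle$; these amount to the same observation.
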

\begin{proof}
Case 1). Assume that $k,s$ belong to different tetrahedrons. Since  $w_k$, $w_s$ commute,  $\langle w_k,w_s \rangle\cong \mathbf C_2^2$.

Case 2). Assume that $k,s$ belong to the same tetrahedron. We have $^hk=s$ for a unique $h\in \bar H$. Under the shift $h$, the vertices of the tetrahedron split into two orbits of length two. 
We get two pairs  $(k,s)$ and, say, $(p,q)$.
Without loss of generality assume that $w_k$ has section $a$ at $s$.  Then we have
\begin{align*}
w_k&=(w \rfloor_k, a \rfloor_s, b \rfloor_p, c \rfloor_q),\\ 
w_s&=(a \rfloor_k, w \rfloor_s, c \rfloor_p, b \rfloor_q). 
\end{align*}   
We use Lemma~\ref{L1} and that $\langle b,c\rangle\cong \mathbf C_2^2$, and obtain
$$ \langle w_k,w_s\rangle \subset \Big(\mathbf D_8\big \rfloor\strut_k,\mathbf D_8\big \rfloor \strut_s,  \mathbf C_2^2\big \rfloor\strut_p ,\mathbf C_2^2\big \rfloor\strut_q \Big). $$
Then for any $g\in W$ we have $g^4=1$. 
\end{proof}

\begin{theorem}\label{Ttetra}
The tetrahedron group $G=\langle w,a,b,c\rangle\subset \Aut T$ is an infinite 2-group.
\end{theorem}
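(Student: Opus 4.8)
The statement bundles two assertions: that $G$ is infinite and that every element is a $2$-element. I would prove them separately. For infiniteness the plan is to use the section homomorphism at the leftmost vertex. Since $a,b,c$ act nontrivially on the first level while $w$ fixes it, the quotient $G/\St_G(1)$ is the image of $H\cong\mathbf C_2^3$, so $[G:\St_G(1)]=8$. On the other hand, restricting an element of $\St_G(1)$ to the subtree hanging at the vertex $0$ gives a homomorphism $\varphi\colon\St_G(1)\to\Aut T$ whose image lies in $G$ by self-similarity. Using the recursion~\eqref{tetrapac} one computes $\varphi(w_0)=w$, $\varphi(w_3)=a$, $\varphi(w_5)=b$, $\varphi(w_6)=c$, so $\varphi$ is onto $G$. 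If $G$ were finite this would force $|\St_G(1)|\ge|G|$, contradicting $|\St_G(1)|=|G|/8$; hence $G$ is infinite.

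For periodicity I would induct on the \emph{$w$-length} $|g|$, the least number of occurrences of $w$ in a word for $g$ in the generators $a,b,c,w$. First one checks (Reidemeister--Schreier with transversal $H$) that $\St_G(1)=\langle w_0,\dots,w_7\rangle$, and that for $g\in\St_G(1)$ written reduced as $g=w_{k_1}\cdots w_{k_m}$ the section at $x\in X$ is $g_x=\prod_i (w_{k_i})_x$, a product containing exactly $m_x:=\#\{i:k_i=x\}$ letters $w$, the remaining factors lying in $H$. Since $\sum_x m_x=m$ and $g^n=(g_0^n,\dots,g_7^n)$ for $g\in\St_G(1)$, this gives both $\sum_x|g_x|\le|g|$ and $\operatorname{ord}(g)=\operatorname{lcm}_x\operatorname{ord}(g_x)$. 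The base case $|g|=0$ is immediate since then $g\in H$. If $g\in\St_G(1)$, then either $g=w_k$ (order $2$) or $|g|\ge 2$, in which case the reduced word uses at least two distinct conjugates, so $m_x\le|g|-1$ and every section is strictly shorter; the inductive hypothesis then makes each $\operatorname{ord}(g_x)$, and hence $\operatorname{ord}(g)$, a power of $2$.

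The remaining, and genuinely delicate, case is $g\notin\St_G(1)$: one must pass to $g^2\in\St_G(1)$, and \emph{a priori} squaring doubles the length, so the naive induction breaks down. This is the main obstacle. I would resolve it by writing $g=us$ with $s\in H\setminus\{1\}$ the first-level part and $u=w_{k_1}\cdots w_{k_n}\in\St_G(1)$, $n=|g|$. Since $s^2=1$,
\begin{equation*}
g^2=us\cdot us=u\,u^{s},\qquad (g^2)_x=u_x\,u_{{}^{s}x}\qquad(x\in X),
\end{equation*}
using $(u^s)_x=u_{{}^sx}$ for the rooted $s$. The crucial point is that a \emph{nonzero} translation $s$ of $X=\mathbf C_2^3$ is fixed-point-free, so $x\mapsto{}^sx$ splits $X$ into four two-element orbits, and $|(g^2)_x|\le m_x+m_{{}^sx}$; because $\sum_y m_y=n$, any such pair-sum is $\le n$, with equality only when all the $k_i$ lie in the single orbit $\{x,{}^sx\}$.

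This produces a clean dichotomy. If no orbit contains all the $k_i$, then $|(g^2)_x|<n$ for every $x$, the inductive hypothesis applies to all sections of $g^2$, and $\operatorname{ord}(g^2)$, hence $\operatorname{ord}(g)$, is a power of $2$. Otherwise all $k_i$ lie in one orbit $\{x_0,{}^sx_0\}$, so $u\in W:=\langle w_{x_0},w_{{}^sx_0}\rangle$; conjugation by $s$ merely swaps the two generators of $W$, so $s$ normalizes $W$ and $g=us\in\langle W,s\rangle$. By Lemma~\ref{Lww} the group $W$ is a finite $2$-group, whence $\langle W,s\rangle$ is too, and $\operatorname{ord}(g)$ is again a power of $2$. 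In either case the induction closes, and together with the infiniteness above this proves the theorem. The one step I expect to demand the most care is verifying the section formula $(g^2)_x=u_x\,u_{{}^sx}$ and the length bookkeeping $\sum_x|g_x|\le|g|$ precisely enough that the fixed-point-free pairing really forces strict decrease outside the Lemma~\ref{Lww} case.
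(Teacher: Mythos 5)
Your proposal is correct, and the periodicity argument is essentially the paper's own proof: the same induction on $w$-length of the reduced form $g=u\,s$ ($u$ a product of the conjugates $w_k$, $s\in H$), the same observation that a nontrivial translation of $X=\mathbf C_2^3$ is fixed-point-free and pairs the sections of $g^2$ into four $2$-orbits with the bound $L\bigl((g^2)_x\bigr)\le m_x+m_{{}^sx}$, and the same dichotomy ending in Lemma~\ref{Lww}. The only cosmetic difference is in the degenerate case: you put $g$ itself inside the finite $2$-group $\langle W,s\rangle$, whereas the paper raises to the square and places $g^2\in W$ before invoking Lemma~\ref{Lww}; both close the induction identically.

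One genuine addition on your side: the paper's proof establishes only periodicity and never addresses infiniteness, while you prove it explicitly via the section map at the vertex $0$ --- showing $\varphi\colon\St_G(1)\to G$ hits $w,a,b,c$ (as sections of $w_0,w_3,w_5,w_6$) and playing surjectivity against $[G:\St_G(1)]=8$. That argument is correct (note $\St_G(1)=\langle w_0,\dots,w_7\rangle$ holds because this subgroup is normalized by $H$, which permutes the $w_k$, and by $w=w_0$, hence contains the normal closure of $w$, which is the kernel of $G\twoheadrightarrow H$), and it fills a gap the paper leaves to the reader.
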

\begin{proof}
Consider $g\in G$, then $g=h_{j_0} w h_{j_1}\cdots w h_{j_s}$, where $h_{j_l}\in H$. 
By moving $h$-letters to the left, we get
\begin{equation} \label{present}
g= h w_{\g_1}w_{\g_2}\cdots w_{\g_m},\qquad \g_l\in X,\ h\in H, \quad m\ge 0,  
\end{equation} 
the neighbours $w_{\g_j}$  being different.
(By Lemma~\ref{Lwkws}, we can additionally collect $w_k$ for the red tetrahedron, and then for the black one. This observation might have some applications).
We define the {\it w-length} using presentation~\eqref{present} as 
$$L(g):= m,\qquad g\in G.$$
We draw attention that $h$ in~\eqref{present} can be equal to the identity. So, the elements of $H$ have zero $w$-length.

We prove periodicity of $g\in G$ by induction on $L(g)$.
The base of induction: let $L(g)=0$ then $g=h\in H$ and $g^2=1$ by~\eqref{z23}.

Fix $m\ge 1$ and assume that the claim is valid for all $g\in G$ with $L(g)< m$. 
Consider $g\in G$ with $L(g)=m$.   

{\bf Case 1.} Assume that  $h\ne 1$ in~\eqref{present}. 
Denote by $n_k$ the multiplicity of $w_k$ in~\eqref{present} for $k\in X$. Then
\begin{equation}
L(g)=\sum_{k=0}^7 n_k=m. 
\end{equation}
The action of the group $H=\mathbf C_2^3$ on $X$ by the shift $h\ne 1$ yields splitting into 4 orbits of length 2:  
\begin{align}
X&=\mathop{\cup}\limits_{j=1}^4\{\a_j,\b_j\},\qquad\text{where}\quad ^h\a_j=\b_j,\ j=1,\ldots,4;  \label{action}\\
L(g)&=\sum_{k=0}^7 n_k=\sum_{j=1}^4(n_{\a_j}+n_{\b_j})=m.\label{lgm1}
\end{align}
Using~\eqref{present}, we get
\begin{align}
g^2&=h w_{\g_1}\cdots w_{\g_m}\cdot  h w_{\g_1}\cdots w_{\g_m} = w_{\g_1}^h\cdots  w_{\g_m}^h w_{\g_1}\cdots w_{\g_m}\nonumber\\ 
&= w_{h(\g_1)}\cdots w_{h(\g_m)} w_{\g_1}\cdots w_{\g_m}\label{decomp00} \\
&=(g_0,g_1,\ldots,g_7)\in G^8\subset H\rightthreetimes G^8, \qquad g_k\in G. \label{decomp0} 
\end{align}
Since the action by $h$~\eqref{action} permutes $\a_j,\b_j$-components, where $j\in\{1,2,3,4\}$, the sections~\eqref{decomp0} of $g^2$ with indices $\a_j,\b_j$
have at most $n_{\a_j}+n_{\b_j}$ factors $w$ each. 
Hence, we can evaluate $w$-lengths of the sections $g_j$ of $g^2$ in~\eqref{decomp0}: 
\begin{equation}\label{lgb} 
\max\{L(g_{\a_j}), L(g_{\b_j})\}\le n_{\a_j}+n_{\b_j},\qquad j=1,2,3,4. 
\end{equation}

a) Assume that $n_{\a_j}+n_{\b_j}<m$ for all $j=1,2,3,4$ in~\eqref{lgm1}. Then using~\eqref{lgb} all sections of $g^2$ 
have $w$-lengths less than $m$ and we apply the inductive assumption. 

b) Assume that  $n_{\a_j}+n_{\b_j}=m$ for some $j\in \{1,2,3,4\}$ in~\eqref{lgm1}. 
Hence $n_{\a_i}+n_{\b_i}=0$ for the remaining indices $i\in \{1,2,3,4\}\setminus \{j\}$, this means that $w_{\a_j}, w_{\b_j}$ are the only letters appearing in~\eqref{decomp00}. 
Therefore, $g^2\in \langle w_{\a_j}, w_{\b_j} \rangle $  and the result follows by Lemma~\ref{Lww}.

{\bf Case 2.} Consider the case $h=1$ in presentation~\eqref{present}. We have
\begin{align}
g &=w_{\g_1}w_{\g_2}\cdots w_{\g_{m}}=(g_0,g_1,\ldots,g_7)\in G^8, \qquad g_k\in G; \label{decomp} \\
L(g)&=\sum_{k=0}^7 n_k=m,\qquad\text{where}\ n_k \text{ is multiplicity of\ } w_k, \ k\in X.\nonumber
\end{align}
Recall that section $w$ appears in each $w_k$ on $k$th place.
Hence, we get bounds on $w$-lengths of sections of~\eqref{decomp}:  
$$L(g_k)\le n_k, \qquad k\in X.$$ 

a) Assume that $n_k< m$ for all $k\in X$. Then $L(g_k)< m$ for all $k\in X$ and 
the claim follows by application of the inductive assumption to all sections.

b) Let for some $k\in X$ we have $n_k=m$. 
So, the decomposition~\eqref{decomp} consists only of $m$ factors $w_k$.  
Hence,  $m=1$ and  $g=w_k$. 
The inductive step follows by~\eqref{involution}.
Theorem is proved.
\end{proof}

\begin{remark}
We split the original unitary cube into 8 subcubes via cutting by planes parallel to the faces and passing through the middles of 4 parallel edges.
As was observed above, $a,b,c$ are isometries of the cube, they permute these 8 subcubes.
We repeat the process to the subcubes, etc.
We obtain an interpretation of the tetrahedron group $G$ as acting on the unitary cube similar to the original interpretation of the Grigorchuk group
acting on the unitary interval without binary points~\cite{Grigorchuk80}.
Now we exclude from the unitary cube the points one of which coordinates is binary. 
\end{remark}

\section{General construction: Dice groups and their periodicity}

\subsection{Groups acting on spherically homogeneous trees}
Below we mainly follow notations of~\cite{Grigorchuk00horizons}.
Fix a  {\it branching  sequence of alphabets} $\bar X:=(X_n\mid  n\ge 1)$, of respective sizes $m_{n}$, $n \ge 1$.
Let $n\ge 1$, introduce the set of specific words
$$\bar X^n:=X_1X_2\cdots X_n:=\{v=y_1\cdots y_{n}\mid  y_{i}\in X_i, 1\le i\le n\},$$
these words are called of {\it length} $|v|:=n$.
Denote by $\emptyset$ the word of zero length.
Put  $\bar X^*:=\cup_{n=0}^\infty \bar X^n$.
Let $\sigma$ be the {\it shift} operator which cuts the first member of the sequence, namely $\sigma \bar X:=(X_2,X_3,X_4,\ldots)$.

Consider the respective tree $T=T_{\bar X}$,
which set of {\it vertices} is $V(T):=\bar X^*$ and the set of {\it edges} $E(T)$ is
given by edges that connect all pairs of words $w$ and $wy$, where $w\in \bar X^n$ and $y\in X_{n+1}$.
Denote the set of words of length $n$ also as $V_n:=\bar X^n$, which we identify with the $n$th {\it level} of the tree.
We consider $\emptyset$ as the {\it root} of the tree, which constitutes the level $V_0$.
The tree $T_{\bar X}$ is called {\it spherically homogeneous}, it is determined by the branching sequence of alphabets $\bar X$.
The finite subtree of $T$ consisting of vertices of levels from 0 to $n$ along with respective connecting edges is denoted by $T_{[n]}$.
Consider $v\in V(T)$, then denote by $T_v$
the set of words in $V(T)$ with a prefix $v$, we identify $T_v$ with the subtree of $T$ with the root at $v$.
We have $T_u\cong T_v$, provided that $u,v$ belong to the same level, say, $V_n$, denote these subtrees as $T_{\langle n\rangle}$.
Clearly, $T_{\langle n\rangle}\cong T_{\sigma^n\bar X}$.

Denote by $\Aut T $ the group of automorphisms of the rooted tree $T$.
We will use the convention that automorphisms act on $T$ on the {\it left}.
Consider the natural mappings determined by restriction of the action:
$\pi_n:$ $\Aut T\twoheadrightarrow \Aut T_{[n]}\subset \Sym (V_n)$, $n\ge 1$.

Let $g\in \Aut T $. Fix $n\ge 1$, consider all vertices $v\in V_n$. Then one has decompositions
\begin{equation}\label{sections}
g(vw)=g(v) g|_{v}(w), \qquad  w\in T_{\langle n\rangle },\quad g(v)\in V_n, \quad g|_{v}\in\Aut T_{\langle n\rangle },
\end{equation}
where $g|_{v}\in\Aut T_{\langle n\rangle }$ are called {\it sections} (or {\it states}) of $g$ at $v$.
Also $g(v)=(\pi_n g)(v)$, $v\in V_n$, where $\pi_n g\in\Aut T_{[n]}$  is called the {\it top action} of $g$.
One obtains relations:
$$
f|_{vw}=(f|_{v})|_{w},\qquad (fg)|_{v}=f|_{g(v)}\cdot  g|_{v}; \qquad f,g\in\Aut T,\ v\in V_n, \ w\in T_{\langle n\rangle }.
$$
The pointwise stabilizer of the $k$th level of $T$ in a subgroup $G\subset \Aut T$ is denoted as $\St_G(k)$ and called $k$th {\it level stabilizer}.

Let a group $H$ acts from the {\it left} by permutations on a set $X$ and $G$ is a group,
we denote action as  $h(x)={^h x}$, where $h\in H$, $x\in X$.
Then the {\it (permutational) wreath product} $H\wr G$ is the semidirect product $H \rightthreetimes G^X$ where
$H$ acts by permutations of the direct factors. Assume that $X=\{x_1,\ldots,x_d\}$.
Then $G^X$ consists of all tuples $(g_1,\ldots,g_d)$, $g_i\in G$.
The multiplication of elements $h (g_1,\ldots,g_d)\in H\wr G$ is given by:
\begin{equation}\label{wr_prod}
\a(g_1,\ldots,g_d)\cdot \b(f_1,\ldots,f_d):=\a\b (g_{\b(1)} f_1,\ldots, g_{\b(d)} f_d),\qquad \a,\b\in H, \ f_i,g_j\in G.
\end{equation}
In a particular case, when $X=H$ we also get the {\it wreath product} $H\wr G=H \rightthreetimes G^H$.  

Denote $X_1=:\{x_1,\ldots,x_m\}$. Now,~\eqref{sections} and~\eqref{wr_prod} yield an isomorphism:
\begin{align*}
\psi_1: \Aut T & \cong \Sym_{X_1} \wr \Aut T_{\langle 1\rangle };\\
\psi_1:g  & \mapsto \pi_1(g)\big(g\big|\strut_{x_1},\ldots,g\big|\strut_{x_m} \big),\qquad g\in \Aut T.
\end{align*}
Denote by $V_n=:\{y_{1},\ldots,y_{M_n}\}$ the vertices of the $n$th level.
More generally, \eqref{sections} yields homomorphisms:
\begin{equation}\label{selfN}
\begin{split}
\psi_n: \Aut T &\cong \Sym_{X_1}\wr \cdots \wr \Sym_{X_n} \wr \Aut T_{\langle n\rangle };\\
g  &\mapsto \pi_n(g) \big(g|\strut_{y_1},\ldots,g|\strut_{{y}_{M_n}} \big),\qquad g\in \Aut T.
\end{split}
\end{equation}

Now consider a particular case. We suppose that the alphabets above are the same, namely $X_i=X$ for all $i\ge 1$. 
The we get a {\it regular rooted tree} $T=T_{\bar X}$.
In this case a group $G\subset\Aut T$ is called {\it self-similar} provided that all sections of the first level belong to $G$.
Namely, for any $g\in G$, $x\in X$ we consider the section of the first level $g|_x\in \Aut T_{\langle 1\rangle}\cong \Aut T$. Then $g|_x\in G$. 

\subsection{Dice groups}
Let $P$ be a finite set of primes.
Let $\bar N:=(N_1,N_2,N_3,\ldots)$ be an infinite tuple of positive integers.
Consider an infinite series of elementary abelian groups $H_i:=\mathbf C_{p_i}^{N_i}$, where $p_i$ is a prime from $P$,
these groups are identified with the sets $H_i$, which we refer to as {\it cubes}. 
The identity elements are denoted as $1_i\in H_i$. Denote by $A_i:=\{a_{ij}\mid 1\le j\le N_i\}$ the standard basis of the group $H_i$.  
Now we get a spherically homogeneous tree $T$  determined by the sequence $(H_1,H_2,H_3,\ldots)$, see above.  
Observe that each group $H_i$ acts by rooted automorphisms on all subtrees $T_{\langle i-1 \rangle}$ of level $i-1$, for all $i\ge 1$.

We {\it roll a dice} infinitely many times, namely we choose $N_{i+1}$ nonidentity {\it defining points} in the cube~$H_i$:
\begin{equation}\label{Yi}
Y_{i}:=\{y_{ij}\mid 1\le j\le N_{i+1}\}\subset H_i\setminus \{1_{i}\}\quad  \text{for all}\quad i\ge 1.
\end{equation}
By construction, these chosen points $Y_i$ are in a bijective correspondence with the basis $A_{i+1}$ of the next group $H_{i+1}$.
By putting elements of $A_{i+1}$ in sections labelled by the respective elements of $Y_i$\eqref{Yi}, we determine recursively a sequence of directed automorphisms of the subtrees
\begin{equation}\label{wi}
w_i:=\Big(w_{i+1}\Big \rfloor_{1_{i}}, a_{i+1,j}\Big \rfloor_{y_{ij}\in Y_{i} },  
      1_{i+1}\Big \rfloor_{y\in H_i\setminus  (Y_i\cup \{1_{i}\}) }  \Big)\in \Aut T_{\langle i-1\rangle },\qquad i\ge 1.
\end{equation}

Now we define a family of finitely generated {\it dice groups} determined by the dice rolling:
\begin{equation*}
G_i:=\langle w_i, A_i \rangle\subset \Aut T_{\langle i-1\rangle },\qquad i\ge 1.
\end{equation*}
In particular, we get our main object, the {\it dice group} $G:=\langle w_1, A_1 \rangle\subset \Aut T. $
Observe that
\begin{align}
(a_{ij})^{p_i}&=1_i,\qquad 1\le j\le N_i,\ i\ge 1;\nonumber\\
(w_i)^{q}&=1, \quad \text{where}\ q:=\prod_{p\in P} p,\quad i\ge 1.\label{qqq}
\end{align}

\subsection{Lucky dice roll at step $i$}
We start with a general condition providing particular cases afterwards.
The most clear particular version is the last one  {\bf DDmin}.
\begin{itemize}
\item{\bf D:} Fix $i\ge 1$. We say that we made a {\it lucky dice roll at step} $i$ provided the following is valid.\\
Consider any line $l\subset H_i$ passing through $1_i\in H_i$ and proceed recursively as follows
\begin{enumerate}
\item As the initial step, denote $m:=i$ and start with the defining points on the line, namely set 
\begin{equation}\label{line}
Z_m:=Y_m\cap l.
\end{equation}
\item If $Z_{m}=\emptyset$ then the process stops. Otherwise we go to the next step.
\item Denote 
$Z_m=:\{y_{m,j_1},\ldots, {y_{m, j_{S_m}}}\}\subset H_{m}$, so $|Z_m|=S_m\ge 1.$ 
Consider $S_m$-dimensional face 
$\Pi_{m+1}:=\langle a_{m+1,j_1}, \ldots, a_{m+1,j_{S_m}}\rangle$ of the next cube $H_{m+1}$
passing through $1_{m+1}$ and parallel to the respective vectors $\{a_{m+1,j_1}, \ldots, a_{m+1,j_{S_m}}\}\subset A_{m+1}$. 
Consider the defining points in this face:
$$Z_{m+1}:=Y_{m+1}\cap \Pi_{m+1}. $$
\item we return to step ii) with $m+1$ and $Z_{m+1}$.
\end{enumerate}
We assume that for any such line $l\subset H_i$ this process terminates.
\end{itemize}
A particular case is that the process always stops after two steps, namely $Z_{i+1}=\emptyset$. Thus we get a subcase.
\begin{itemize}
\item {\bf DD:} Fix $i\ge 1$. We say that we made a {\it lucky dice roll at step} $i$ provided the following is valid. \\
Consider any line $l\subset H_i$ passing through $1_i\in H_i$ and assume that $Y_i\cap l=  \{y_{i,j_1},\ldots, {y_{i, j_S}}\}$. 
Consider $S$-dimensional face $\Pi :=\langle a_{i+1,j_1}, \ldots, a_{i+1,j_{S}}\rangle$ of the next cube $H_{i+1}$
passing through $1_{i+1}$ and parallel to the respective vectors $\{a_{i+1,j_1}, \ldots, a_{i+1,j_S}\}$.
We require that
$$   Y_{i+1}\cap \Pi=\emptyset,\qquad \text{for all such lines } l\subset H_i.$$
\end{itemize}
More {\bf specific cases} of the {\it lucky dice roll at step} $i$ of type {\bf DD} are:
\begin{itemize}
\item {\bf DDmax:} Fix $i\ge 1$. Set
$$
d_i:=\max\{ |l\cap  Y_i| \mid \text{for all lines } l\subset H_i  \text{ passing through } 1_i\}.
$$
Then we require that all $d_i$-dimensional faces of the next cube $H_{i+1}$ passing through $1_{i+1}$ have no points of $Y_{i+1}$.
\end{itemize}
Since $H_i=\mathbf C_{p_i}^{N_i}$, we have $d_i\le p_i-1$ above. So, we get a simpler version.
\begin{itemize}
\item {\bf DDmax-1:} Fix $i\ge 1$.  We require that all $(p_i{-}1)$-dimensional faces of the next cube $H_{i+1}$ passing through $1_{i+1}$
have no points of $Y_{i+1}$.
\end{itemize}
By setting $d_i=1$, we get probably the easiest particular case of a {\it lucky dice roll at step} $i$: 
\begin{itemize}
\item {\bf DDmin:} Fix $i\ge 1$. We require that
\begin{itemize}
\item all lines in $H_i$ passing through $1_i$ contain at most one point of $Y_i$ and
\item all axes of $H_{i+1}$ (i.e. edges passing through $1_{i+1}$)  
have no  points of $Y_{i+1}$.
\end{itemize}
\end{itemize}

\subsection{Periodicity of the dice groups}

Now we formulate and prove our main result that yields a wide class of infinite finitely generated periodic groups, periods of which elements are products involving finitely many given primes.
\begin{theorem}\label{Tmain}
Let $P$ be a finite set of primes,
$\bar N:=(N_1,N_2,N_3,\ldots)$ a tuple of positive integers. 
Consider the series of elementary abelian groups $H_i:=\mathbf C_{p_i}^{N_i}$, where $p_i\in P$ for all $i\ge 1$.
Let $T=T_{\bar H}$ be the spherically homogeneous tree determined by the sequence of alphabets $\bar H=(H_1,H_2,H_3,\ldots)$. 
Let $A_i:=\{a_{ij}\mid 1\le j\le N_i\}$ be the standard basis of the group $H_i$, $i\ge 1$. 
We define directed elements $\{w_i\mid  i\ge 1\}$ rolling a dice infinitely many times, see \eqref{Yi} and \eqref{wi}. 
Consider the dice group
\begin{equation*}
G:=\langle w_1, A_1 \rangle\subset \Aut T.
\end{equation*}
We assume  that the  dice rolling is lucky at infinitely many steps $i\in\Bbb N$ (i.e. condition {\bf D} or one of its particular stronger versions is valid).
Then $G$ is periodic and elements periods are products of primes from~$P$.
\end{theorem}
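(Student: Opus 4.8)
The plan is to mimic the proof of Theorem~\ref{Ttetra}, replacing the unconditional finiteness of Lemma~\ref{Lww} by a finiteness statement that the lucky condition~\textbf{D} forces. First I would fix a level $i$ and put an arbitrary $g\in G_i$ into the normal form $g=h\,w_{i,\gamma_1}^{e_1}\cdots w_{i,\gamma_m}^{e_m}$, where $h\in H_i$, the $w_{i,k}:=w_i^{\,t}$ (with $t\in H_i$, $t(1_i)=k$) are the translates of $w_i$ carrying the principal section $w_{i+1}$ to the vertex $k$, adjacent $\gamma$'s are distinct, and $1\le e_j<q$; the \emph{$w$-length} is $L(g):=\sum_j e_j$. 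The whole assertion is then proved by induction on $L(g)$, uniformly over all levels, since the sections of an element of $G_i$ lie in the dice group $G_{i+1}$, which still rolls luckily infinitely often. The base $L(g)=0$ gives $g\in H_i$, of order dividing $p_i\in P$. For the step, the top action of $g$ is the element $h\in\mathbf C_{p_i}^{N_i}$, so its $p_i$-th power kills the top action and the self-similar decomposition gives $g^{p_i}=(g_v)_{v\in H_i}$ with each $g_v\in G_{i+1}$, exactly as in~\eqref{decomp00}--\eqref{decomp0}. Counting how often the section $w_{i+1}$ can reach a fixed vertex bounds $L(g_v)$ by the sum $\sum_u n_u$ of the total exponents $n_u$ of the $w_{i,u}$ over the $\langle h\rangle$-orbit of $v$ (a coset of the line $l_0:=\langle h\rangle$), the analogue of~\eqref{lgb}; when $h=1$ one gets the sharper $L(g_v)\le n_v$ as in~\eqref{decomp}.

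If every orbit-sum is strictly smaller than $m=L(g)$, then all sections have strictly smaller $w$-length, the induction hypothesis bounds each $|g_v|$ by a product of primes of $P$, and $|g|$ divides $p_i\cdot\mathrm{lcm}_v|g_v|$, again a $P$-product. The only obstruction is the \emph{concentrated case}, where one orbit-sum equals $m$: then (if $h=1$ this simply means $g=w_{i,v}^{\,e}$, of order dividing $q$ by~\eqref{qqq}; so assume $h\neq1$) all principal vertices $\gamma_j$ lie on a single coset of $l_0=\langle h\rangle$, so $g$ belongs to the line-concentrated subgroup $M:=\langle h,\ w_{i,k}\mid k\in l_0\text{-coset}\rangle$, and it suffices to show $M$ finite of $P$-exponent. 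Here I would compute the first-level sections of $M$: the rooted $h$ acts trivially on sections, while the $w_{i,k}$ deposit the principal section $w_{i+1}$ at the vertices of the coset and an auxiliary basis vector $a_{i+1,j}$ at a vertex exactly when the defining point $y_{ij}$ lies on $l_0$. Thus at each principal vertex the section group is $M^{(1)}:=\langle w_{i+1},\ a_{i+1,j}\mid y_{ij}\in Y_i\cap l_0\rangle$, supported on the face $\Pi_{i+1}$ of~\textbf{D}(iii), while at the remaining vertices only elements of $H_{i+1}$ occur. Iterating, the conjugates of $w_{i+1}$ by $\Pi_{i+1}$ spread its principal section over the whole face, so the deeper section groups are $M^{(t)}=\langle w_{i+t},\Pi_{i+t}\text{-basis}\rangle$ with $\Pi_{i+t+1}$ spanned by the vectors dual to $Y_{i+t}\cap\Pi_{i+t}$ --- precisely the face recursion of condition~\textbf{D}. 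When that recursion terminates the section group collapses to $\langle w_{i+t}\rangle\cong\mathbf C_q$, and ascending through the finitely many finite semidirect layers shows $M$ finite.

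The main obstacle is that~\textbf{D} is only assumed at infinitely many steps, whereas the concentrated case may first arise at a \emph{non-lucky} step $i$, where the face recursion above need not terminate. The resolution I would use is that for a \emph{single} element, as opposed to the whole subgroup $M$, the concentration at each successive level is carried by a \emph{line}, namely the direction $\langle h^{(t)}\rangle$ of the top action of the relevant section; the line-based descent is dominated at every stage by the face recursion (each line-intersection $Y\cap l$ sits inside the face-intersection $Y\cap\Pi$, and the faces stay nested), and at the first lucky step $i'\ge i$ the current line $l'=\langle h^{(i'-i)}\rangle\subset H_{i'}$ is governed by condition~\textbf{D} at $i'$, whose face recursion from $l'$ terminates and hence dominates the element's descent to extinction. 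Consequently the descent of any fixed $g$ reaches, after finitely many levels, sections lying in $\mathbf C_q$ or in some $H_j$, so $L$ eventually drops to $0$ along every path and $|g|$ is a product of primes of $P$; the specialisations \textbf{DD}, \textbf{DDmax}, \textbf{DDmin} merely force this recursion to stop after one or two steps and only serve to keep the bookkeeping explicit. Making precise that ``lucky infinitely often'' suffices although concentration can be detected earlier --- i.e.\ reconciling the element-wise line descent with the subgroup-wise face recursion --- is the delicate point I expect to occupy most of the argument.
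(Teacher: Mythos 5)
Your proposal is correct and takes essentially the same route as the paper's own proof: the same reduced form and induction on $w$-length, the same spread/concentrated dichotomy for the sections of $g^{p_i}$, and the same resolution of the concentrated case, namely an element-wise descent carried by lines through the non-lucky levels (lengths non-increasing, off-line sections lying in the abelian layers) until the first lucky level, where the face recursion of condition \textbf{D} terminates and kills the element. The ``delicate point'' you flag at the end is treated in the paper exactly as you outline (its subcases 1b and 1c), so there is no genuine gap or divergence to report.
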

\begin{proof}
We start treating the first layer. 
Recall that $w_1$ has the section $w_2$ at $1_{1}\in H_1$. 
Temporarily denote $v_{\a}:=w_1^{\a}=  \a^{-1} w_1 \a\in \St_G(1)$, for all $\a \in H_1$.
Then $(v_\a)^{\b}=v_{\a\b}$, where $\a,\b\in H_1$.
Observe that 
\begin{equation}\label{section}
v_{\a}\big|_{\a^{-1}}=w_2,\qquad \a\in H_1. 
\end{equation}

Consider $g\in G=G_1$. It is a product of  $w_1$'s and elements $A_1\subset H_1$. By moving $H_1$-factors to the left we obtain  a {\it reduced form}
\begin{equation} \label{present2}
g= h v_{\g_1}^{n_1}v_{\g_2}^{n_2}\cdots v_{\g_m}^{n_m},\quad h,\g_l\in H_1,\ 1\le n_j<p_1,\quad m\ge 0,  
\end{equation} 
where the neighbour elements $v_{\g_j}$ are different.
We define the {\it w-length} of presentation~\eqref{present2} counting syllables of conjugates of $w_1$ only: 
$$L(g):= m.$$
We draw attention that $h$ in~\eqref{present} can be equal to the identity element. So, the elements of $H_1$ have zero length
and we count the number of different syllables $v_\g^{n_1}$ above. 

We proceed by induction on $L(g)=m$. 
The base of induction is $m=0$. Then $g=h\in H_1 $ and $h^{p_1}=1$.

{\bf Case 1.} We assume that $h\ne 1$ in presentation~\eqref{present2}. Then we consider
\begin{align} 
g^{p_1}&= h v_{\g_1}^{n_1}\cdots v_{\g_m}^{n_m}\cdot\ldots\cdot     h v_{\g_1}^{n_1}\cdots v_{\g_m}^{n_m}\cdot  h v_{\g_1}^{n_1}\cdots v_{\g_m}^{n_m} \nonumber\\
&= (v_{\g_1}^{n_1})^{h^{p_1-1}}\!\!\!\!\cdots (v_{\g_m}^{n_m})^{h^{p_1-1}}\!\!\!\cdot\ldots\cdot     (v_{\g_1}^{n_1})^h\cdots (v_{\g_m}^{n_m})^h\cdot   v_{\g_1}^{n_1}\cdots v_{\g_m}^{n_m}\nonumber \\
&= v_{\g_1h^{p_1-1}}^{n_1} \cdots v_{\g_mh^{p_1-1}}^{n_m}\cdot \ldots\cdot     v_{\g_1h}^{n_1}\cdots  v_{\g_mh}^{n_m}\cdot   v_{\g_1}^{n_1}\cdots v_{\g_m}^{n_m} \label{F1}\\
&=\big(g_\b\big \rfloor_{\b,\ \b\in H_1}\ \big)\in G_2^{H_1}, \qquad\text{where}\quad g_\b :=g^{p_1}|_{\b}\in G_2=\langle w_2,A_2\rangle.\label{F2}
\end{align}

Consider the action of $h$ on $H_1$ by shifts. We get  $p_1^{N_1-1}$ orbits of length $p_1$. Choose representatives $\{\a_j\mid 1\le j\le p_1^{N_1-1}\}\subset H_1$ in each orbit.
Denote by $m_\a$ the number of syllables $v_\a^{n_\a}$ in~\eqref{present2}, $\a\in H_1$. We get
\begin{equation}\label{Lg}
L(g)=\sum_{j=1}^{p_1^{N_1-1}} (m_{\a_j}+m_{\a_j h}+\cdots+ m_{\a_j h^{p_1-1}})=m.  
\end{equation}

Let $\a\in H_1$, consider the section $g_{\a^{-1}}:=g^{p_1}|_{\a^{-1}} \in G_2$ in~\eqref{F2}. 
By~\eqref{section}  and~\eqref{F1}, all factors $w_2$ in this section are collected from the sections of the $h$-line passing through $\a^{-1}$.   
Now we evaluate the $w$-length of this section that counts now occurrences of the syllables $(w^n_2)^{\b}$, $b\in H_2$ ih the reduced form similar to~\eqref{present2}.
Thus, using~\eqref{Lg} we get for all sections~\eqref{F2}
\begin{equation}\label{Lgm}
\max\{ L(g_{\a_j^{-1}}), L(g_{\a_j^{-1}h}),\ldots, L(g_{\a_j^{-1}h^{p_1-1}})  \}\le m_{\a_j}+m_{\a_jh}+\cdots+m_{\a_jh^{p_1-1}}\le m.  
\end{equation}

a). Not all numbers in~\eqref{Lg} belong to a single $h$-line. Then by bound above $L(g_\a)<m$ for all $\a\in H_1$ and
we apply the inductive assumption to all sections~\eqref{F2}. 

b). All numbers in~\eqref{Lg} belong to a single $h$-line passing through $\a\in H_1$ but  the step $i=1$ is not a lucky roll. 
Then by~\eqref{Lgm} the $w$-lengths of the sections~\eqref{F2} on the line passing through $\a^{-1}$ did no increase,
while sections apart form the line have $w$-lengths equal to zero.
We continue the process, recall that by assumption after finitely many steps we get a lucky roll and we come to the next option.

c). Assume that  all nonzero numbers in~\eqref{Lg} belong to a single $h$-line  $l$  passing through $\a\in H_1$ and $i=1$ is a lucky roll.   
By the shift  $\tilde g:=g^{\a}$, we see that   without loss of generality we can assume that $l$ passes trough $\tilde\a:=1_1\in H_1$.
By assumption {\bf D}, the nontrivial sections $v_\g$ of the record of $g$ as~\eqref{present2} on line $l$ are  only $w_2$ and $\{a_{2,j_1},\ldots, a_{2,j_{S_1}}\}$. 

In order to get a recursive construction we denote $g_1:=g$ and $\Pi_1:=l$.  By assumption c), all $g_1$-sections outside $\Pi_1$ are either trivial or belong to $A_2$. 
We have $g_1^{p_1}\in \St_G(1)\subset G_{2}^{H_1}$.  Let $g_2$ be a section of $g_1^{p_1}$ at $\b\in H_1$. 
Collecting sections in $h$-parallel lines~\eqref{F1}, we observe that
\begin{equation}\label{gg2}
G_{2}\ni g_2:=(g_1)^{p_1}\big |_\b\in 
\begin{cases} H_2, & \b\in H_1\setminus \Pi_1;\\ \langle w_2, a_{2,j_1},\ldots, a_{2,j_{S_1}}\rangle,\quad &\b\in \Pi_1.     
\end{cases} 
\end{equation}
Thus, the sections above outside $\Pi_1$ belong to $H_2$ and disappear while rasing into the power $p_2$.
Thus, we consider the sections $g_2$ above at points of $\Pi_1$ only:
$$
g_2=h_2 g_2',\qquad h_2\in \Pi_2:= \langle a_{2,j_1},\ldots, a_{2,j_{S_1}}\rangle\subset  H_2, \qquad g_2'\in \langle w_2 ^{a_{2,j_1}}\!\!\!,\ldots, w_2^{a_{2,j_{S_1}}}\rangle.
$$
By {\bf D}, while conjugating above the section $w_3$ travels through the $S_1$-dimensional face $\Pi_2\subset H_2$ that passes through $1_2$ parallel to $\{a_{2,j_1},\ldots, a_{2,j_{S_1}}\}$,
while nonempty $g_2$-sections at points of  $H_2\setminus \Pi_2$ belong to $A_3$. 
Thus, the sections of $g_2^{p_2}$ at $\Pi_1$ are obtained  by collecting sections in parallel $h_2$-lines~\eqref{F1}, where $h_2\in\Pi_2$:
\begin{equation}\label{gg3}
G_{3}\ni g_3:=(g_2)^{p_2}\big |_\b\in 
\begin{cases} H_3, & \b\in H_2\setminus \Pi_2;\\ \langle w_3, a_{3,j_1},\ldots, a_{3,j_{S_2}}\rangle,\quad &\b\in \Pi_2.    
\end{cases}
\end{equation}
By assumption, the process terminates at some $m$. Thus we get sections
\begin{equation*}
G_{m +1}\ni g_{m+1}:=(g_m)^{p_m}\big |_\b\in 
\begin{cases} H_{m+1}, & \b\in H_m \setminus \Pi_m;\\ \langle w_{m+1}\rangle,\quad &\b\in \Pi_m.     
\end{cases}
\end{equation*}
This recursion and~\eqref{qqq} yield   $g^{p_1\cdots p_m q}=1$.

{\bf Case 2.} We assume that $h= e$ in presentation~\eqref{present2}. We get
\begin{equation} \label{present3}
g= v_{\g_1}^{n_1}v_{\g_2}^{n_2}\cdots v_{\g_m}^{n_m}
=\big(g_\b\big \rfloor _{\b,\ \b\in H_1} \big), \qquad\text{where}\quad g_\b:=g|_\b \in G_2=\langle w_2,A_2\rangle.
\end{equation}
Since each $v_{\g_i}^{n_i}$ has a unique section $w_2^{n_i}$ that goes into the section $g_{\g_i^{-1}}$ in~\eqref{present3}, we get
$$ 
\sum_{\b\in H_1} L(g_\b)\le L(g)=m.
$$

a). Consider that $L(g_\b)<m$ for all $\b\in H_1$, then we apply the induction to all sections $g_\b$ above.

b). Assume that there exists $\g\in H_1$ such that $L(g_\g)=m=L(g)$.
Since $w_2$ appears in the section $g_\g$ only for $v_{\g^{-1}}$ it follows that $m=1$ and $g=v_{\g^{-1}}^{n}$, hence $g^{q}=1$. 
\end{proof} 

\begin{corollary}
Assume that the construction of the dice group is periodic, i.e. there exists $M\ge 1$ such that $H_{i+M}\cong H_i$ for all $i\ge 1$,
and the same periodicity is valid for  dice rolling as well. Assume that we have at least one lucky roll in the period.
Then the dice group $G$ is self-similar and periodic.
\end{corollary}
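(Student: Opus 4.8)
The plan is to deduce the Corollary directly from Theorem~\ref{Tmain}. The Corollary makes two assertions, periodicity and self-similarity, and these rest on slightly different hypotheses, so I would treat them separately. The hypotheses are: the data $(H_i)$ and the dice rolls are eventually periodic with period $M$, so $H_{i+M}\cong H_i$ and the chosen defining points $Y_{i+M}$ correspond to $Y_i$ under the identification; and at least one step inside each period of length $M$ is a lucky roll.

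For periodicity, the key observation is that the periodicity hypothesis immediately supplies the missing input of Theorem~\ref{Tmain}. Concretely, if the construction has period $M$ and there is at least one lucky roll among the steps $\{1,\ldots,M\}$, say step $i_0$, then by periodicity the step $i_0+kM$ is also a lucky roll for every $k\ge 0$. Hence the dice rolling is lucky at infinitely many steps $i\in\mathbb N$, which is exactly the standing assumption of Theorem~\ref{Tmain}. Applying that theorem verbatim gives that $G$ is periodic with element orders that are products of primes from the finite set $P$ appearing among the $p_i$. So the first half is a one-line invocation once the infinitely-many-lucky-rolls condition is checked.

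For self-similarity, I would argue that the periodicity $H_{i+M}\cong H_i$ makes the tree $T$ into a regular rooted tree after grouping $M$ consecutive levels, or more directly that the shifted construction reproduces itself: under the identification $H_{i+M}\cong H_i$ together with the matching of defining points $Y_{i+M}\leftrightarrow Y_i$, the recursively defined directed automorphism $w_{i+M}$ is carried to $w_i$, and consequently $G_{i+M}\cong G_i$ for all $i$, with $G_M$ isomorphic to $G=G_1$ as subgroups of $\operatorname{Aut}T_{\langle i-1\rangle}$. The point is that a section of a generator of $G$ at a level-$M$ vertex lands in $G_{M+1}\cong G_1=G$, so all sections at the coarse level belong to $G$; this is the definition of self-similarity with respect to the branching by $M$ levels. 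I would phrase this by identifying the single alphabet $X:=H_1\cdots H_M$ of the $M$-fold collapsed tree and checking that $\psi_M$ from~\eqref{selfN} sends each generator into the wreath form with all sections in $G$.

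The main obstacle, and the only step requiring care rather than bookkeeping, is verifying that the identification $H_{i+M}\cong H_i$ is compatible with the recursion~\eqref{wi} defining the $w_i$, i.e.\ that the periodicity of the abstract data genuinely descends to a periodicity of the constructed automorphisms $w_{i+M}=w_i$ under the collapse. One must confirm that the basis $A_{i+1}$ and the defining points $Y_i\subset H_i$ are matched coherently across the period so that the placement of sections in~\eqref{wi} is preserved; this is what the phrase ``the same periodicity is valid for dice rolling'' is meant to guarantee, and I would make it explicit. Once this compatibility is recorded, both the self-similarity (sections of generators at level-$M$ vertices are again generators of a copy of $G$) and the periodicity (infinitely many lucky rolls, feeding Theorem~\ref{Tmain}) follow without further computation.
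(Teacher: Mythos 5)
Your proposal is correct and takes essentially the same route as the paper: periodicity is obtained by feeding the observation that one lucky roll per period yields infinitely many lucky rolls into Theorem~\ref{Tmain}, and self-similarity is obtained from the map~\eqref{selfN} applied at level $M$ after collapsing the tree to a regular one. Your write-up simply makes explicit the compatibility of the identifications $H_{i+M}\cong H_i$, $Y_{i+M}\leftrightarrow Y_i$, $w_{i+M}\leftrightarrow w_i$ that the paper's one-line proof leaves implicit.
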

\begin{proof} Self-similarity follows from~\eqref{selfN}.
\end{proof}

\begin{remark}
Observe that we can easily make dice groups infinite, periodic (and self-similar if required). 
\begin{itemize}
\item
We need to avoid only the group $\mathbf C_2$. Even using sometimes $\mathbf C_3$ we can get an infinite periodic (and self-similar) group with oscillating sizes of the groups $H_n$.
\item
For example, we start with $G:=G_1:=\langle w_1, a_{11}\rangle $, where $a:=a_{11}$ is the generator of $\mathbf C_3$ and 
$w_1:=(w_2\rfloor_{1_1},a_{21}\rfloor_a, a_{22}\rfloor_{a^2})$, and $A_2:=\{a_{21}, a_{22}\}$ is the basis of $H_2:=\mathbf C_3^2$, 
so the next group is $G_2:=\langle w_2, A_2\rangle $ and we can continue the process.
In a similar way we can include large pieces of the Gupta-Fabrykowski group~\cite{FabGup85} and obtain periodic (optionally self-similar) groups.
\item  
If we want to have a lucky dice roll at step $i$, then case $H_{i+1}:=\mathbf C_2^2$ is not acceptable. 
\item
On the other hand we can set  $H_{n}:=\mathbf C_3^2$  for all $n\ge 1$ and even satisfy {\bf DDmin} at all steps.
\end{itemize} 
\end{remark}



\begin{thebibliography}{99}
\bibitem{FabGup85}
    Fabrykowski, J., Gupta, N.,
    On groups with sub-exponential growth functions.
    {\it J. Indian Math. Soc. (N.S.)} {\bf 49} (1985), no. 3-4, 249--256.
\bibitem{Golod64}
    Golod, E.S.
    On nil-algebras and finitely approximable p-groups.
    {\it Am. Math. Soc., Translat., II. Ser.} {\bf 48}, 103--106 (1965);
    translation from {\it Izv. Akad. Nauk SSSR, Ser. Mat}. {\bf 28}, 273--276 (1964).
\bibitem{Grigorchuk80}
    Grigorchuk,~R.I.,
    On the Burnside problem for periodic groups.,
    {\it Funktsional. Anal. i Prilozhen}. {\bf 14} (1980), no. 1, 53--54.
\bibitem{Grigorchuk00horizons}
    Grigorchuk,~R.I.,
    Just infinite branch groups. New horizons in pro-$p$ groups, 121--179,
    Progr. Math., 184, Birkhauser Boston, Boston, MA, 2000.
\bibitem{GuptaSidki83}
    Gupta~N., and Sidki~S.,
    On the Burnside problem for periodic groups.,
    {\it Math. Z}. {\bf 182} (1983), no. 3, 385--388.
\bibitem{Nekr05}
   Nekrashevych, V.,
   Self-similar groups.
   Mathematical Surveys and Monographs {\bf 117}.
   Providence, RI: American Mathematical Society (AMS) (2005).
\bibitem{Pe06}
   Petrogradsky~V.M.,
   Examples of self-iterating Lie algebras,
   {\it J. Algebra}, {\bf 302} (2006), no.\,2, 881--886.
\bibitem{Pe16}
   Petrogradsky~V.,
   Fractal nil graded Lie superalgebras,
   {\it J. Algebra}, {\bf 466} (2016), 229--283.
\bibitem{Pe17}
   Petrogradsky V.,
   Nil Lie $p$-algebras of slow growth,
   {\it Comm. Algebra.} {\bf 45}, (2017), no.~7, 2912--2941.
\bibitem{Pe20clover}
   Petrogradsky V.,
   Clover nil restricted Lie algebras of quasi-linear growth, {\it J. Algebra Appl.}, {\bf 21}, (2022), no.~3, 2250057.  
\bibitem{Pe20flies}
   Petrogradsky V.,
   Nil restricted Lie algebras of oscillating intermediate growth, {\it J. Algebra},  {\bf 588}, (2021), 349--407.
\bibitem{Petschick23A}
   Petschick, J.M. Two periodicity conditions for spinal groups.,
   {\it J. Algebra} {\bf 633} (2023), 242--269.
\bibitem{Petschick23B}
   Petschick, J.M.
   Groups of small period growth.,
   {\it Proc. Edinb. Math. Soc.} (2) {\bf 66} (2023), no. 3, 625--641.
\bibitem{Petschick24}
   Petschick, J.M. 
   On finitely generated Engel branch groups.,
   {\it J. Lond. Math. Soc.} (2) {\bf 110} (2024), no. 3, Paper No. e12980, 24 pp.
\end{thebibliography}
\end{document}